\newcommand{\bdot}{\boldsymbol{\cdot}}
\theoremstyle{plain}
\newtheorem{theorem}{\bf Theorem}[section]
\newtheorem{proposition}[theorem]{\bf Proposition}
\newtheorem{lemma}[theorem]{\bf Lemma}
\theoremstyle{definition}
\newtheorem{examples}[theorem]{\bf Examples}
\newcommand{\N}{\mathbb N}
\newcommand{\Z}{\mathbb Z}
\newcommand{\Q}{\mathbb Q}
\newcommand{\red}{{\text{\rm red}}}
 \DeclareMathOperator{\ord}{ord}
 \DeclareMathOperator{\supp}{supp}
\newcommand{\LK}{\llbracket}
\newcommand{\RK}{\rrbracket}
\renewcommand{\t}{\, | \,}
\renewcommand{\bdot}{\boldsymbol{\cdot}}
\numberwithin{equation}{section}
\newcommand\zeu@Scale{1.05}
\begin{document}
\title{On half-factoriality of transfer Krull monoids}

\author[W. Gao]{Weidong Gao}
\address{Weidong Gao, Center for Combinatorics, LPMC-TJKLC Nankai University, Tianjin 300071, P.R. China}\email{wdgao@nankai.edu.cn}

\author[C. Liu]{Chao Liu}
\address{Chao Liu, Department of Mathematics and Statistics, Brock University, St. Catharines, Ontario, Canada L2S 3A1} \email{chaoliuac@gmail.com}

\author[S. Tringali]{Salvatore Tringali}
\address{salvatore tringali, College of Mathematics and Computer Science, Hebei Normal University, Shijiazhuang 050024, P.R. China}\email{salvo.tringali@gmail.com}

\author[Q. Zhong]{Qinghai Zhong}
\address{Qinghai Zhong\\
University of Graz, NAWI Graz \\
Institute for Mathematics and Scientific Computing \\
Heinrichstra{\ss}e 36\\
8010 Graz, Austria} \email{qinghai.zhong@uni-graz.at}

\thanks{The last-named author was supported by the Austrian Science Fund (FWF Project P28864--N35).}

\subjclass[2010]{11B30, 11R27, 13A05, 13F05, 20M13}

\keywords{Transfer Krull monoids, zero-sum sequences, sets of lengths, half-factorial}

\begin{abstract}
Let $H$ be a  transfer Krull monoid over a subset $G_0$ of an abelian group $G$ with finite exponent.  Then every non-unit $a\in H$ can be written
	as a finite product of atoms, say $a=u_1 \cdot \ldots \cdot u_k$. The set $\mathsf L(a)$ of all possible factorization lengths $k$ is
	called the set of lengths of $a$, and $H$ is said to be half-factorial if $|\mathsf L(a)|=1$ for all $a\in H$. 
	
	We show that, if $a \in H$ and $|\mathsf L(a^{\lfloor (3\exp(G) - 3)/2 \rfloor})| = 1$, then the smallest divisor-closed submonoid of $H$ containing $a$ is half-factorial.
	In addition, we prove that, if $G_0$ is finite and $\bigl|\mathsf L\bigl(\prod_{g\in G_0}g^{2\ord(g)}\bigr)\bigr| = 1$, then $H$ is  half-factorial.
\end{abstract}

\maketitle
\thispagestyle{empty}	
	
\section{Introduction}
	
Let $H$ be a monoid. If an element $a \in H$ has a factorization $a = u_1 \cdot \ldots \cdot u_k$, where $k\in \N$ and $u_1, \ldots, u_k$ are atoms of $H$,  then $k$ is called a factorization length of $a$, and the set $\mathsf L (a)$ of all possible $k$ is referred to as the set of lengths of $a$. The monoid $H$ is said to be half-factorial (half-factorial) if 
$|\mathsf L (a)|=1$ for every $a \in H$. Half-factoriality has been a central topic in factorization theory since the early days of this field (e.g., see \cite{Ch-Co00,  Co05a,  Ma-Ok09a, Ro11a, Co-Sm11a, Ge-Ka-Re15a, Ma-Ok16a}).

Given $a\in H$, let $\LK a\RK = \{ b \in H \mid b \ \text{divides some power of} \ a \}$ be the smallest divisor-closed submonoid of $H$ containing $a$. Then $\LK a\RK$ is half-factorial if and only if $|\mathsf L(a^n)|=1$ for all $n\in\N$, and  $H$ is half-factorial if and only if $\LK c\RK$ is half-factorial for every $c \in H$. It is thus natural to ask:

\vskip 0.2cm
\begin{quote}
Does there exist an integer $N \in \mathbb N$ such that, if $a\in H$ and $|\mathsf L(a^N)|=1$, then 
		$\LK a \RK$ is half-factorial\,? (Note that, if $\LK a \RK$ is half-factorial for some $a \in H$, then of course $|\mathsf L(a^k)| = 1$ for every $k \in \mathbb N$.)
\end{quote}
\vskip 0.1cm
We answer this question  affirmatively for transfer Krull monoids over finite abelian groups, and we study the smallest $N$ having the above property (Theorems \ref{t1} and \ref{main}).

Transfer Krull monoids and transfer Krull domains are a recently introduced class of monoids and domains including, among others, all commutative Krull domains and wide classes of non-commutative Dedekind domains (see Section \ref{2} and \cite{Ge16c} for a survey).

	Let $H$ be a transfer Krull monoid over a subset $G_0$ of an abelian group $G$. Then $H$ is half-factorial if and only if the monoid $\mathcal B (G_0)$ of zero-sum sequences over $G_0$ is half-factorial (in this case, we also say that the set $G_0$ is half-factorial). It is a standing conjecture that every abelian group has a half-factorial generating set, which implies that every abelian group can be realized as the class group of a half-factorial Dedekind domain (\cite{Ge-Go03}). 
	
	Suppose now that $H$ is a commutative Krull monoid with class group $G$ and that every class contains a prime divisor. It is a classic result that $H$ is half-factorial if and only if $|G|\le 2$, and it turns out that, also for $|G| \ge 3$, half-factorial subsets (and minimal non-half-factorial subsets) of the class group $G$ play a crucial role in a variety of arithmetical questions (see  \cite[Chapter 6.7]{Ge-HK06a}, \cite{Ge-Zh16a}). However, we are far away from a good understanding of  half-factorial sets in finite abelian groups (see \cite{Sc05c} for a survey, and \cite{Pl-Sc05a, Pl-Sc05b, Sc06a}). To mention one open question, the maximal size of half-factorial subsets is unknown even for finite cyclic groups (\cite{Pl-Sc05b}). Our results open the door to a computational approach to the study of half-factorial sets.

More in detail, denote by $\mathsf{hf} (H)$  the infimum of all $N \in \N$ with the following property:
\vskip 0.2cm 
\begin{center}
If $a \in H$ and $|\mathsf L(a^N)| = 1$, then $\llbracket a\RK$ is half-factorial.
\end{center}
\vskip 0.1cm
(Here, as usual, we assume $\inf \emptyset = \infty$.) We call $\mathsf{hf} (H)$ the \emph{half-factoriality index} of $H$.
	If $H$ is not  half-factorial, then $\mathsf{hf}(H)$ is the infimum of all  $N\in \N$ with the property that $|\mathsf L(a^N)|\ge 2$ for every $a\in H$ such that $\LK a\RK$ is not half-factorial.

\smallskip
\begin{theorem}\label{t1}
Let	 $H$ be a transfer Krull monoid over a finite subset $G_0$ of an abelian group $G$ with finite exponent.
The following are equivalent:
\begin{enumerate}
\item[(a)] $H$ is half-factorial. \item[(b)] $\mathsf{hf} (H)=1$.
\item[(c)] $G_0$ is half-factorial.
\item[(d)] $\bigl|\mathsf L\bigl(\prod_{g\in G_0}g^{2\ord(g)}\bigr)\bigr| = 1$.
\end{enumerate}	
\end{theorem}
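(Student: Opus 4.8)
The plan is to transfer everything to the monoid $\mathcal B(G_0)$ of zero-sum sequences over $G_0$, which is legitimate since half-factoriality, sets of lengths, and the index $\mathsf{hf}$ are transfer invariants. Two facts will drive the argument. First, for any $a\in\mathcal B(G_0)$ one has $\LK a\RK=\mathcal B(\supp(a))$, because a zero-sum sequence $B$ divides a power of $a$ precisely when $\supp(B)\subseteq\supp(a)$; hence $\LK a\RK$ is half-factorial iff the set $\supp(a)$ is half-factorial. Second, I will invoke the classical cross-number characterization: writing $\mathsf k(B)=\sum_{g}\mathsf v_g(B)/\ord(g)$ for the cross number (an additive map $\mathcal B(G_0)\to\Q_{\ge 0}$), a finite $G_0$ is half-factorial iff $\mathsf k(U)=1$ for every atom $U$ over $G_0$. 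The workhorse is the length/cross-number identity: for any factorization $B=U_1\cdots U_k$ one has $k-\mathsf k(B)=\sum_{i=1}^{k}(1-\mathsf k(U_i))$. Set $P:=\prod_{g\in G_0}g^{\ord(g)}$ and $a_0:=\prod_{g\in G_0}g^{2\ord(g)}=P^2$; then $\supp(a_0)=G_0$ (so $\LK a_0\RK=\mathcal B(G_0)$), $\mathsf k(a_0)=2|G_0|$, and factoring each $g^{\ord(g)}$ as an atom shows $2|G_0|\in\mathsf L(a_0)$. With this setup, (a)$\Leftrightarrow$(c) is the transfer statement quoted in the Introduction, (a)$\Rightarrow$(d) is immediate, and (a)$\Rightarrow$(b) follows because half-factoriality passes to divisor-closed submonoids (their atoms agree with those of $H$), so the defining property of $\mathsf{hf}$ holds with $N=1$.

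For (b)$\Rightarrow$(a) I would argue the contrapositive. If $G_0$ is not half-factorial, choose $G_1\subseteq G_0$ minimal among non-half-factorial subsets; then $|G_1|\ge 2$ and every atom with support properly inside $G_1$ lives over a half-factorial proper subset, hence has cross number $1$. Since $G_1$ is not half-factorial, the characterization forces an atom $W$ with $\supp(W)=G_1$ and $\mathsf k(W)\ne 1$. But then $\mathsf L(W)=\{1\}$ while $\LK W\RK=\mathcal B(G_1)$ is not half-factorial, so $N=1$ fails the defining property of $\mathsf{hf}$ and therefore $\mathsf{hf}(H)\ge 2$.

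The crux is (d)$\Rightarrow$(a), which I would prove contrapositively by induction on $|G_0|$: if $G_0$ is not half-factorial then $|\mathsf L(a_0)|\ge 2$. Pick an atom $W$ with $\mathsf k(W)\ne 1$ and let $S=\supp(W)$. If $S\subsetneq G_0$, then $S$ is a smaller non-half-factorial set, so by induction $\prod_{g\in S}g^{2\ord(g)}$ has at least two factorization lengths; since this element divides $a_0$, factoring $a_0$ as its product with the complementary sequence and using $\mathsf L(xy)\supseteq\mathsf L(x)+\mathsf L(y)$ gives $|\mathsf L(a_0)|\ge 2$. This reduces the problem to the base case in which $G_0$ is \emph{minimal} non-half-factorial, so that every atom of cross number $\ne 1$ has full support.

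In that minimal case the goal is to exhibit a factorization of $a_0=P^2$ of length different from the standard $2|G_0|=\mathsf k(a_0)$. Here $W$ has full support with $1\le\mathsf v_g(W)\le\ord(g)-1$ for all $g$ (so $W^2\mid P^2$), and by the length/cross-number identity any factorization using full-support atoms has length $2|G_0|$ plus an integer ``defect'' coming solely from those atoms, while every proper-support atom contributes $0$. The main obstacle is to realize a \emph{nonzero} defect inside $a_0$: since lengths are integers, one must combine full-support atoms so that their cross numbers sum to an integer distance from their count, yet still fit within the multiplicities $2\ord(g)$. A single atom $W$ need not suffice, because the largest power of $W$ dividing $a_0$ can be as small as $W^2$ while the denominator of $\mathsf k(W)$ may be large; this is exactly the point at which the exponent $2\ord(g)$ (rather than $\ord(g)$) provides the necessary room. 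I expect this minimal case to be the technical heart of the theorem, requiring a dedicated combinatorial lemma on zero-sum sequences — note that the already-available power theorem only controls the high power $a^{\lfloor(3\exp(G)-3)/2\rfloor}$ and so cannot be applied to the low power $a_0=P^2$ directly.
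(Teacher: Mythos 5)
Your reductions are all sound: the passage to $\mathcal B(G_0)$, the identification $\LK a\RK=\mathcal B(\supp(a))$, the implications (a)\,$\Leftrightarrow$\,(c), (a)\,$\Rightarrow$\,(b), (a)\,$\Rightarrow$\,(d), your contrapositive for (b)\,$\Rightarrow$\,(a) (the paper does the same without invoking a minimal subset: any atom $A$ with $\mathsf k(A)\ne 1$ already has $|\mathsf L(A)|=1$ while $\LK A\RK=\mathcal B(\supp(A))$ is not half-factorial), and the induction reducing (d)\,$\Rightarrow$\,(c) to the case where the offending atom has full support. But the proposal stops exactly where the theorem has to be proved: you explicitly leave the base case of (d)\,$\Rightarrow$\,(c) as a hoped-for ``dedicated combinatorial lemma.'' That is a genuine gap, and moreover the framing via a minimal non-half-factorial $G_0$ and integer ``defects'' of full-support atoms is not how it closes; no induction or minimality of the \emph{set} is needed at all.

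What closes it is minimality of the \emph{cross number}, split on whether $G_0$ is an LCN-set. If every atom has $\mathsf k\ge 1$, pick an atom $T$ with $\mathsf k(T)>1$; since $\mathsf v_g(T)\le\ord(g)$, $T$ divides $P=\prod_{g\in G_0}g^{\ord(g)}$, and writing $P=T\cdot W_1\cdots W_l$ gives $|G_0|=\mathsf k(T)+\sum_{i=1}^l\mathsf k(W_i)>1+l$, so $\{|G_0|,\,1+l\}\subset\mathsf L(P)$ and (d) fails (Proposition~\ref{p1}.1). Otherwise choose an atom $A$ with $\mathsf k(A)<1$ \emph{minimal over all atoms}, pick $g_0\in\supp(A)$ minimizing $\ord(g)/\mathsf v_g(A)$, and set $m=\bigl\lceil\ord(g_0)/\mathsf v_{g_0}(A)\bigr\rceil$. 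Then $A^m=g_0^{\ord(g_0)}\cdot W_1\cdots W_s$ with atoms $W_i$, and minimality of $\mathsf k(A)$ forces $m\,\mathsf k(A)=1+\sum_{i=1}^s\mathsf k(W_i)>(1+s)\,\mathsf k(A)$, hence $m\ne s+1$ and $|\mathsf L(A^m)|\ge 2$ (Lemma~\ref{l1}.3); and for every $g\in\supp(A)$ one has $\mathsf v_g(A)\,m\le\mathsf v_g(A)\bigl\lceil\ord(g)/\mathsf v_g(A)\bigr\rceil\le\ord(g)+\mathsf v_g(A)-1<2\ord(g)$, so $A^m$ divides $\prod_{g\in G_0}g^{2\ord(g)}$ and (d) fails again. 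This single-atom argument directly answers the obstacle you raised: the point is not to accumulate an integer defect from copies of $W$ inside $P^2$ (where indeed only $W^2$ may fit), but to take the atom of minimal cross number to the exponent $m$ and extract $g_0^{\ord(g_0)}$; the factor $2$ in $2\ord(g)$ exists precisely to absorb the overshoot $\mathsf v_g(A)-1$ in the ceiling, as you correctly sensed but did not turn into a proof.
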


We observe that in general if $H$ is half-factorial, then $\mathsf{hf} (H)=1$. But if $H$ is a transfer Krull monoid over a subset of a torsion free group, then $\mathsf{hf} (H)=1$ does not imply that $H$ is half-factorial (see Example \ref{2.4}.1). Furthermore, for every $n\in \N$, there exists a Krull monoid $H$ with finite class group such that 
$\mathsf{hf}(H)=n$ (see Example \ref{2.4}.2).


%

\smallskip
\begin{theorem}\label{main}
Let $H$ be a transfer  Krull monoid over an abelian group $G$. 
\begin{enumerate}
\item $\mathsf{hf} (H) < \infty$ if and only if $\exp (G) < \infty$.  
\item If $\exp(G)<\infty$ and $|G|\ge 3$, then
$
\exp(G)\le \mathsf{hf} (H)\le \frac{3}{2}(\exp(G)-1)$.
\item If $G$ is cyclic or $\exp (G) \le 6$, then $\mathsf{hf} (H)=\exp(G)$.
\end{enumerate}
\end{theorem}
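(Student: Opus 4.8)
The plan is to first pass from $H$ to the monoid $\mathcal B(G_0)$ of zero-sum sequences, taking $G_0=G$. Since a transfer homomorphism $\theta\colon H\to\mathcal B(G_0)$ is surjective and satisfies $\mathsf L_H(a)=\mathsf L_{\mathcal B(G_0)}(\theta(a))$, half-factoriality of $\LK a\RK$ is equivalent to that of $\LK\theta(a)\RK$ and every element of $\mathcal B(G_0)$ arises as some $\theta(a)$; hence the sets of admissible $N$ coincide and $\mathsf{hf}(H)=\mathsf{hf}(\mathcal B(G))$. I will repeatedly use one explicit gadget. If $g\in G$ has finite order $n=\ord(g)\ge 3$, then the atoms of $\mathcal B(\{g,-g\})$ are exactly $W_0=g\bdot(-g)$, $W_+=g^{n}$ and $W_-=(-g)^{n}$; writing $a=W_0$, a direct count shows that $a^{m}=g^{m}\bdot(-g)^{m}$ factors only as $W_0^{x}W_+^{y}W_-^{y}$ with $x+ny=m$, so that $\mathsf L(a^{m})=\{\,m-(n-2)y : 0\le y\le\lfloor m/n\rfloor\,\}$ and $|\mathsf L(a^{m})|=\lfloor m/n\rfloor+1$.

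\textbf{Lower bounds (part (2), and one implication of (1)).} From this formula $|\mathsf L(a^{m})|=1$ exactly when $m<n$, while $\LK a\RK=\mathcal B(\{g,-g\})$ is not half-factorial because $W_+W_-=W_0^{\,n}$ forces $\{2,n\}\subseteq\mathsf L(W_+W_-)$. Choosing $g$ of order $n=\exp(G)$ (such $g$ exists whenever $\exp(G)<\infty$) and $m=\exp(G)-1$ produces an element witnessing that $N=\exp(G)-1$ fails the defining property of $\mathsf{hf}$; hence $\mathsf{hf}(H)\ge\exp(G)$, which is the lower bound in (2) (valid once $\exp(G)\ge 3$) and the lower bound needed in (3). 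For the forward implication of (1) I argue contrapositively: if $\exp(G)=\infty$, then either $G$ has elements of arbitrarily large finite order — and the same gadget with $\ord(g)=n>N$ defeats every candidate $N$ — or $G$ has an element $g$ of infinite order, in which case I use instead $\mathcal B(\{g,-g,Mg,-Mg\})$ with $M>N$; the analogous four-atom count yields a power whose length set is a singleton up to exponent $M-1$ while the set itself is not half-factorial. In all cases no finite $N$ works, so $\mathsf{hf}(H)=\infty$.

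\textbf{Upper bounds (the other implication of (1), and part (2)).} Here I invoke the result quoted in the abstract: if $|\mathsf L(a^{\lfloor(3\exp(G)-3)/2\rfloor})|=1$ then $\LK a\RK$ is half-factorial. When $\exp(G)=n<\infty$ this says precisely that $N=\lfloor 3(n-1)/2\rfloor$ has the defining property, whence $\mathsf{hf}(H)\le\lfloor 3(n-1)/2\rfloor\le\tfrac32(\exp(G)-1)<\infty$. This gives both finiteness in the easy direction of (1) and the upper bound in (2). Moreover, for $\exp(G)\in\{3,4\}$ one checks $\lfloor 3(n-1)/2\rfloor=n$, so together with $\mathsf{hf}(H)\ge\exp(G)$ this already yields the exact value $\mathsf{hf}(H)=\exp(G)$ asserted in (3).

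\textbf{Part (3): the sharp value and the main obstacle.} It remains to improve the upper bound from $\tfrac32(\exp(G)-1)$ to $\exp(G)$ when $G$ is cyclic or $\exp(G)\in\{5,6\}$ (the cases $\exp(G)\le 4$ being settled above, and the degenerate cases $|G|\le 2$ or $\exp(G)=2$ requiring a separate, direct half-factoriality check). The plan is to show that if $\LK a\RK=\mathcal B(\supp(a))$ is not half-factorial, then already $|\mathsf L(a^{\exp(G)})|\ge 2$. Since $a^{\exp(G)}$ carries every element of $\supp(a)$ with multiplicity at least $\exp(G)$, while every atom over a group of exponent $\exp(G)$ uses each element with multiplicity at most $\exp(G)$, a non-half-factorial relation can be embedded inside $a^{\exp(G)}$; the task is to embed it while controlling the length of the complementary part so that a genuine length discrepancy survives. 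Carrying this out rests on the fine structure theory of half-factorial subsets — in the cyclic case through cross numbers and the classification of atoms of $\mathcal B(G_0)$ for $G_0\subseteq G$, and for $\exp(G)\in\{5,6\}$ through an explicit analysis of the minimal non-half-factorial sets. This embedding-with-length-control step is the crux, and I expect it to be the main difficulty, since the general bound of (2) is genuinely not tight here and no soft argument replaces the group-specific combinatorics.
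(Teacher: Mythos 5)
Your reduction to $\mathcal B(G)$, both lower-bound gadgets (the count in $\mathcal B(\{g,-g\})$ for $\ord(g)\ge 3$, and the four-element set $\{g,-g,Mg,-Mg\}$ for an element of infinite order, where the paper instead uses the atoms $A_n=((n+1)g)(-ng)(-g)$ to the same effect), and the observation that $\lfloor 3(n-1)/2\rfloor=n$ for $n\in\{3,4\}$ are all correct. But the proposal has a genuine gap at its center: you obtain the upper bound $\mathsf{hf}(H)\le\frac32(\exp(G)-1)$ by invoking ``the result quoted in the abstract,'' and that result \emph{is} the upper bound of part (2) of this theorem --- the abstract merely announces it, and the paper proves it nowhere except inside this very proof. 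In a blind proof this is circular, and it is exactly where the work lies. The paper's argument runs as follows: let $S$ be a zero-sum sequence with $G_0=\supp(S)$ not half-factorial. If $G_0$ is an LCN-set, pick an atom $T$ with $\mathsf k(T)>1$ and write $\prod_{g\in G_0}g^{\ord(g)}=T\cdot W_1\cdot\ldots\cdot W_l$, so that $\{|G_0|,1+l\}\subset\mathsf L\bigl(\prod_{g\in G_0}g^{\ord(g)}\bigr)$, and this product divides $S^{\lfloor(3\exp(G)-3)/2\rfloor}$. Otherwise choose an atom $A_0=g_1^{l_1}\cdot\ldots\cdot g_y^{l_y}$ with $\mathsf k(A_0)<1$ minimal among all atoms; minimality forces $2l_i\le\ord(g_i)-1$ for every $i$ (else $A_0^2=g_i^{\ord(g_i)}\cdot W$ with $\mathsf k(W)=2\mathsf k(A_0)-1<\mathsf k(A_0)$), whence $l_i\bigl\lceil\ord(g_1)/l_1\bigr\rceil\le\ord(g_i)+\frac{\ord(g_i)-1}{2}-1\le\frac{3\exp(G)-3}{2}$, so $A_0^{\lceil\ord(g_1)/l_1\rceil}$ divides $S^{\lfloor(3\exp(G)-3)/2\rfloor}$; and a cross-number lemma shows $\bigl|\mathsf L\bigl(A_0^{\lceil\ord(g_1)/l_1\rceil}\bigr)\bigr|\ge 2$, since $A_0^{\lceil\ord(g_1)/l_1\rceil}=g_1^{\ord(g_1)}\cdot W_1\cdot\ldots\cdot W_s$ and comparing $\lceil\ord(g_1)/l_1\rceil\,\mathsf k(A_0)=1+\sum_{i=1}^s\mathsf k(W_i)>(1+s)\mathsf k(A_0)$ gives $\lceil\ord(g_1)/l_1\rceil\ne s+1$. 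None of this is routine, and none of it appears in your proposal.

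Part (3) suffers the same defect in amplified form: for cyclic $G$ and for $\exp(G)\in\{5,6\}$ you state the target ($|\mathsf L(S^{\exp(G)})|\ge 2$) and an intention to ``embed with length control,'' explicitly deferring the crux, so this portion is a plan rather than a proof. For comparison, the paper's cyclic case rests on a dedicated proposition: if at most one $g\in\supp(S)$ satisfies $\ord(g)=\exp(G)\mathsf v_g(S)$, then $|\mathsf L(S^{\exp(G)})|\ge 2$ (such extremal elements are precisely what can block the divisibility of the relevant atom power into $S^{\exp(G)}$); and if two such elements exist in a cyclic group they form a two-element non-half-factorial subset $\{g_1,kg_1\}$ of full order, which is settled by a separate $|G_0|=2$ argument. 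For $\exp(G)\in\{5,6\}$ the paper first reduces to atoms $W$ with $\mathsf k(W)<1$, $W\nmid S$, $|\supp(W)|\ge 3$, hence $4\le |W|<\exp(G)$, and then finishes by explicit identities, e.g.\ $g_1^5g_2^5g_3^5=W^2\cdot(g_1g_2^3g_3^3)$ for $W=g_1^2g_2g_3$ followed by a factorization count of $S^5$, and for $\exp(G)=6$ relations extracted from $2\sigma(W)=0$ or $3\sigma(W)=0$. Finally, a small but real slip: for $\exp(G)=2$, $|G|\ge 3$ you call for a ``direct half-factoriality check,'' but $\mathcal B(G)$ is \emph{not} half-factorial there (e.g.\ $e_1e_2(e_1+e_2)$ over $C_2^2$); what one actually verifies is that $G$ is an LCN-set, so the LCN argument above yields $|\mathsf L(S^2)|\ge 2$ and $\mathsf{hf}=2$. (Note in passing that the literal bound $\frac32(\exp(G)-1)$ in part (2) fails at $\exp(G)=2$, where $\mathsf{hf}=2>\frac32$; both the paper's proof and yours implicitly assume $\exp(G)\ge 3$ in part (2).)
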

%
%
We postpone the proofs of Theorems \ref{t1} and \ref{main} to Section \ref{sec:3}.

\section{Preliminaries} \label{2}

Our notation and terminology are consistent with \cite{Ge-HK06a}. Let  $\mathbb N$ be  the set of positive integers, let $\N_0=\N\cup\{0\}$, and let $\Q$ be the set of rational numbers.  For integers $a, b \in \mathbb Z$, we denote by
$[a, b ] = \{ x \in \mathbb Z \mid a \le x \le b\}$  the discrete, finite interval between $a$ and $b$.

\smallskip
\noindent
{\bf Atomic monoids.}  By a {monoid}, we mean an associative semigroup with identity, and if not stated otherwise we use multiplicative notation. Let $H$ be a monoid with identity $1=1_H \in H$.  An element $a \in H$ is said to be invertible (or a unit) if there exists an element $a'\in H$ such that $aa'=a'a=1$. The set of invertible elements of $H$ will be denoted by $H^{\times}$, and we say that $H$ is reduced if $H^{\times}=\{1\}$.  The monoid $H$ is said to be unit-cancellative if for any two elements $a,u \in H$,   each of the  equations $au=a$ or $ua=a$ implies that $u \in H^{\times}$. Clearly, every cancellative monoid is unit-cancellative.

Suppose that $H$ is unit-cancellative. An element $u \in H$ is said to be irreducible (or an atom) if $u \notin H^{\times}$ and for any two elements $a, b \in H$, $u=ab$ implies that $a \in H^{\times}$ or $b \in H^{\times}$. Let  $\mathcal A (H)$ denote the set of atoms of $H$. We say that $H$ is atomic if every non-unit is a finite product of atoms. If $H$ satisfies the ascending chain condition on principal left ideals and on principal right ideals, then $H$ is atomic  (\cite[Theorem 2.6]{Fa-Tr18a}). If $a \in H \setminus H^{\times}$ and $a=u_1 \ldots u_k$, where $k \in \N$ and $u_1, \ldots, u_k \in \mathcal A (H)$, then $k$ is a factorization length of $a$, and
\[
\mathsf L_H (a) = \mathsf L (a) = \{k \in \mathbb N \mid k \ \text{is a factorization length of} \ a \}
\]
denotes the {set of lengths} of $a$. It is convenient to set $\mathsf L (a) = \{0\}$ for all $a \in H^{\times}$.

A \emph{transfer Krull mononid} is a monoid $H$ having a weak transfer homomorphism (in the sense of \cite[Definition 2.1]{Ba-Sm15}) $\theta \colon H \to \mathcal B (G_0)$, where $\mathcal B (G_0)$ is the monoid of zero-sum sequences over a subset $G_0$ of an abelian group $G$. If $H$ is a commutative Krull monoid with class group $G$ and $G_0 \subset G$ is the set of classes containing prime divisors, then there is a weak transfer homomorphism $\theta \colon H \to \mathcal B (G_0)$. Beyond that, there are wide classes of non-commutative Dedekind domains having a weak transfer homomorphism to a monoid of zero-sum sequences (\cite[Theorem 1.1]{Sm13a}, \cite[Theorem 4.4]{Sm19a}). We refer to \cite{Ge16c, Ge-Zh19a} for  surveys on transfer Krull monoids. If $\theta \colon H \to \mathcal B (G_0)$ is a weak transfer homomorphism, then sets of lengths in $H$ and in $\mathcal B (G_0)$ coincide (\cite[Lemma 2.7]{Ba-Sm15}) and thus the statements of Theorems \ref{t1} and \ref{main} can be proved in the setting of monoids of zero-sum sequences.

\medskip
\noindent
{\bf Monoids of zero-sum sequences.} Let $G$ be an abelian group and let $G_0 \subset G$ be a non-empty subset. Then $\langle G_0 \rangle$ denotes the subgroup generated by $G_0$. In Additive Combinatorics, a { sequence} (over $G_0$) means a finite unordered sequence of terms from $G_0$ where repetition is allowed, and (as usual) we consider sequences as elements of the free abelian monoid with basis $G_0$. Let
\[
S = g_1 \cdot \ldots \cdot g_{\ell} = \prod_{g \in G_0} g^{\mathsf v_g (S)} \in \mathcal F (G_0)
\]
be a sequence over $G_0$. We call
\[
\begin{aligned}
\supp (S)&  = \{g \in G \mid \mathsf v_g (S) > 0 \} \subset G \ \text{the \ \emph{support} \ of \ $S$} \,,\\
|S|  &= \ell = \sum_{g \in G} \mathsf v_g (S) \in \mathbb N_0 \
\text{the \ \emph{length} \ of \ $S$} \,,   \\
\sigma (S) & = \sum_{i = 1}^{\ell} g_i \ \text{the \ \emph{sum} \ of \
	$S$} \,, \\
\ \ \text{ and }\ \ \  \Sigma (S) &= \Big\{ \sum_{i \in I} g_i
\mid \emptyset \ne I \subset [1,\ell] \Big\} \ \text{ the \ \emph{set of
		subsequence sums} \ of \ $S$} \,.
\end{aligned}
\]
The sequence $S$ is said to be
\begin{itemize}
	\item \emph{zero-sum free} \ if \ $0 \notin \Sigma (S)$,
	
	\item a \emph{zero-sum sequence} \ if \ $\sigma (S) = 0$,
	
	\item a \emph{minimal zero-sum sequence} \ if it is a nontrivial zero-sum
	sequence and every proper  subsequence is zero-sum free.
\end{itemize}
The set of zero-sum sequences $\mathcal B (G_0) = \{S \in \mathcal F (G_0) \mid \sigma (S)=0\} \subset \mathcal F (G_0)$ is a submonoid, and the set of minimal zero-sum sequences is the set of atoms of $\mathcal B (G_0)$.
For any arithmetical invariant $*(H)$ defined for a monoid $H$, we write $*(G_0)$ instead of $*(\mathcal B (G_0))$. In particular, $\mathcal A (G_0) = \mathcal A (\mathcal B (G_0))$ is the set of atoms of $\mathcal B (G_0)$ and $\mathsf{hf}(G_0)=\mathsf{hf}(\mathcal B(G_0))$.

\medskip

Let $G$ be an abelian group. We denote by $\exp(G)$ the exponent of $G$ which is the least common multiple of the orders of all elements of $G$.
Let $r \in \N$ and let  $(e_1, \ldots, e_r)$ be an $r$-tuple of elements of $G$. Then $(e_1, \ldots, e_r)$ is said to be independent if $e_i \ne 0$ for all $i \in [1,r]$ and if for all $(m_1, \ldots, m_r) \in \Z^r$ an equation $m_1e_1+ \ldots + m_re_r=0$ implies that $m_ie_i=0$ for all $i \in [1,r]$. Suppose $G$ is finite. The $r$-tuple $(e_1, \ldots, e_r)$ is said to be a basis of $G$ if it is independent and $G = \langle e_1 \rangle \oplus \ldots \oplus \langle e_r \rangle$. For every $n \in \N$, we denote by $C_n$ an additive cyclic group of order $n$.
 Since $G \cong C_{n_1} \oplus \ldots \oplus C_{n_r}$,  $r = \mathsf r (G)$ is the rank of $G$ and $n_r=\exp(G)$ is the exponent of $G$.

Let $G_0\subset G$ be a non-empty subset.  For a  sequence $S = g_1 \cdot \ldots \cdot g_{\ell} \in \mathcal F (G_0)$, we call
\[
\begin{aligned}
\mathsf k (S) & = \sum_{i=1}^l \frac{1}{\ord (g_i)} \in \mathbb Q_{\ge 0} \quad \text{the \emph{cross number} of $S$, and } \\
\mathsf K (G_0) & = \max \{ \mathsf k (S) \mid S \in \mathcal A (G_0) \} \quad \text{the \emph{cross number} of $G_0$}.
\end{aligned}
\]
For the relevance of cross numbers in  the theory of non-unique factorizations,  see \cite{Pl-Sc05b, Sc05d, Sc09c} and \cite[Chapter 6]{Ge-HK06a}.

The set $G_0$ is called
\begin{itemize}
	\item \emph{half-factorial} if  the monoid $\mathcal B (G_0)$ is half-factorial;
	
	\item \emph{non-half-factorial} if the monoid $\mathcal B (G_0)$ is not half-factorial;
	
	\item \emph{minimal non-half-factorial} if $G_0$ is not half-factorial but all its proper subsets are;
	
	\item an \emph{\textup{LCN}-set} if $\mathsf k(A)\ge 1$ for all atoms $A\in \mathcal A(G_0)$.
\end{itemize}

The following simple result (\cite[Proposition 6.7.3]{Ge-HK06a}) will be used throughout the paper without further mention.

\medskip
\begin{lemma} \label{2.3}
	Let $G$ be a finite abelian group and $G_0 \subset G$ a subset. Then
	the following statements are equivalent{\rm \,:}
	\begin{enumerate}
		\item[(a)]
		$G_0$ is half-factorial.
		
		\smallskip
		
		\item[(b)]
		$\mathsf k (U) = 1$ for every $U \in \mathcal A (G_0)$.
		
		\smallskip
		
		\item[(c)]
		$\mathsf L (B) = \{ \mathsf k (B) \}$ for every $B \in
		\mathcal B (G_0)$.
	\end{enumerate}
\end{lemma}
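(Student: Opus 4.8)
The plan is to establish the cyclic chain of implications $(b) \Rightarrow (c) \Rightarrow (a) \Rightarrow (b)$. The single tool I use throughout is that the cross number is \emph{additive} on $\mathcal F(G_0)$: directly from the definition, $\mathsf k(ST) = \mathsf k(S) + \mathsf k(T)$ for all $S, T \in \mathcal F(G_0)$. In particular, if $B = U_1 \cdot \ldots \cdot U_k$ is a factorization of $B \in \mathcal B(G_0)$ into $k$ atoms, then $\mathsf k(B) = \sum_{i=1}^{k} \mathsf k(U_i)$.

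The implications $(b) \Rightarrow (c)$ and $(c) \Rightarrow (a)$ are routine. For $(b) \Rightarrow (c)$, assuming $\mathsf k(U) = 1$ for every atom $U$, any factorization $B = U_1 \cdot \ldots \cdot U_k$ has length $k = \sum_{i=1}^{k} \mathsf k(U_i) = \mathsf k(B)$ by additivity; hence every element of $\mathsf L(B)$ equals $\mathsf k(B)$, so $\mathsf L(B) = \{\mathsf k(B)\}$. The implication $(c) \Rightarrow (a)$ is immediate, since $\mathsf L(B) = \{\mathsf k(B)\}$ forces $|\mathsf L(B)| = 1$ for every $B \in \mathcal B(G_0)$, which is exactly the assertion that $G_0$ is half-factorial.

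The substantive step, and the place where I expect the main difficulty, is $(a) \Rightarrow (b)$. I would first record the auxiliary fact that for each $g \in G_0$ the sequence $g^{\ord(g)}$ is an atom of $\mathcal B(G_0)$ --- it is a zero-sum sequence, while every proper subsequence $g^m$ with $1 \le m < \ord(g)$ satisfies $0 \notin \Sigma(g^m)$ --- and that its cross number is $\ord(g) \cdot \frac{1}{\ord(g)} = 1$. Now let $U = \prod_{g \in \supp(U)} g^{\mathsf v_g(U)}$ be an arbitrary atom, put $L = \exp(G)$ (which is finite and divisible by $\ord(g)$ for every $g$), and consider the single zero-sum sequence $S := U^{L}$. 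On the one hand, $S$ factors into $L$ copies of $U$, so $L \in \mathsf L(S)$. On the other hand, since $\ord(g) \mid L$ we may regroup $S = \prod_{g} g^{L \mathsf v_g(U)} = \prod_{g} \bigl(g^{\ord(g)}\bigr)^{L \mathsf v_g(U)/\ord(g)}$ into atoms of the special form just described, a factorization of length $\sum_{g} \frac{L \mathsf v_g(U)}{\ord(g)} = L\,\mathsf k(U)$; hence $L\,\mathsf k(U) \in \mathsf L(S)$. By half-factoriality $|\mathsf L(S)| = 1$, which forces $L = L\,\mathsf k(U)$ and therefore $\mathsf k(U) = 1$.

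The crux is thus the construction of a single zero-sum sequence attached to $U$ that admits two factorizations of lengths $L$ and $L\,\mathsf k(U)$; once this double factorization is in place, half-factoriality finishes the argument in one line. Everything else reduces to bookkeeping with the additive cross number and the elementary check that each $g^{\ord(g)}$ is an atom of cross number $1$.
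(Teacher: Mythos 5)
Your proof is correct. The paper does not prove this lemma itself (it is quoted from \cite[Proposition~6.7.3]{Ge-HK06a}), but your key step for (a)~$\Rightarrow$~(b) --- factoring $U^{\exp(G)}$ both as $\exp(G)$ copies of $U$ and as $\exp(G)\,\mathsf k(U)$ atoms of the form $g^{\ord(g)}$, then invoking half-factoriality --- is exactly the double-factorization computation this paper uses in the proof of Lemma~\ref{l1}.1, so your route is essentially the standard one.
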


\begin{lemma}\label{l1}
	Let $G$ be a finite group, let $G_0\subset G$ be a subset, let $S$ be a zero-sum sequence over $G_0$, and let $A$ be a minimal zero-sum sequence over $G_0$.
	\begin{enumerate}
		\item If $\mathsf k(A)\neq 1$, then $|\mathsf L(A^{\exp(G)})|\ge 2$.
		
		\item If there exists a zero-sum subsequence  $T$ of $S$ such that $|\mathsf L(T)|\ge 2$, then $|\mathsf L(S)|\ge 2$.

		\item If $\mathsf k(A)<1$ and $\mathsf k(A)$ is minimal over all minimal zero-sum sequences over $G_0$, then 
		\[
		\left|\mathsf L\left(A^{\left\lceil\frac{\ord(g)}{\mathsf v_{g}(A)}\right\rceil}\right)\right|\ge2,\quad \text{for all } g\in \supp(A).
		\]
	\end{enumerate}
\end{lemma}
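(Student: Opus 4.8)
The three parts share a common engine, which I would record at the outset and then run each argument off. First, the cross number is additive, $\mathsf{k}(ST)=\mathsf{k}(S)+\mathsf{k}(T)$ for all $S,T\in\mathcal{F}(G_0)$. Second, for every $g\in G_0$ the sequence $g^{\ord(g)}$ is a minimal zero-sum sequence with $\mathsf{k}(g^{\ord(g)})=1$. Third, whenever $\mathsf{k}(A)$ is minimal among all atoms, any factorization $B=U_1\cdots U_\ell$ of a zero-sum sequence $B$ satisfies $\mathsf{k}(B)=\sum_i\mathsf{k}(U_i)\ge \ell\,\mathsf{k}(A)$, hence $\max\mathsf{L}(B)\le \mathsf{k}(B)/\mathsf{k}(A)$.

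For part (1), write $n=\exp(G)$. Since $\ord(g)\mid n$ for each $g\in\supp(A)$, I would regroup $A^{n}=\prod_{g\in\supp(A)}g^{\,n\mathsf{v}_g(A)}$ into blocks $g^{\ord(g)}$, obtaining a factorization into $\sum_{g}\tfrac{n\mathsf{v}_g(A)}{\ord(g)}=n\,\mathsf{k}(A)$ atoms (an integer, as each summand is one). Setting this against the obvious factorization of $A^n$ into $n$ copies of the atom $A$ produces the two lengths $n$ and $n\,\mathsf{k}(A)$, which are distinct exactly because $\mathsf{k}(A)\ne 1$; hence $|\mathsf{L}(A^n)|\ge 2$.

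Part (2) is the standard observation that lengths add along products. Writing $S=T\cdot T'$ with $T'=ST^{-1}$, the hypotheses $\sigma(S)=\sigma(T)=0$ make $T'$ a zero-sum sequence, so fixing any $m\in\mathsf{L}(T')$ and two distinct $k_1,k_2\in\mathsf{L}(T)$ yields distinct lengths $k_1+m,\,k_2+m\in\mathsf{L}(S)$. (If $T'$ is empty then $S=T$ and the claim is immediate.)

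The real content is in part (3). Fix $g\in\supp(A)$ and set $m=\lceil \ord(g)/\mathsf{v}_g(A)\rceil$, the least integer with $m\,\mathsf{v}_g(A)\ge\ord(g)$; then $m\in\mathsf{L}(A^m)$, since $A^m$ is a product of $m$ copies of $A$. The plan is to manufacture a strictly shorter factorization. As $m\,\mathsf{v}_g(A)\ge\ord(g)$, the atom $g^{\ord(g)}$ is a subsequence of $A^m$, so $A^m=g^{\ord(g)}\cdot R$ with $R$ a zero-sum sequence of cross number $\mathsf{k}(R)=m\,\mathsf{k}(A)-1$ (here $R\ne\emptyset$, because $\mathsf{k}(A)<1$ forces $|\supp(A)|\ge 2$, so $R$ retains the terms of $A^m$ distinct from $g$). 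By the minimality bound, every factorization of $R$ has length at most $\mathsf{k}(R)/\mathsf{k}(A)=m-1/\mathsf{k}(A)$, so adjoining $g^{\ord(g)}$ gives a factorization of $A^m$ of length at most $m+1-1/\mathsf{k}(A)<m$, the strict inequality coming from $\mathsf{k}(A)<1$. This produces a length below $m$, whence $|\mathsf{L}(A^m)|\ge 2$. \textbf{The main obstacle} is locating the right split and seeing that both hypotheses must be spent in tandem: minimality of $\mathsf{k}(A)$ caps the length of any factorization of $R$, while $\mathsf{k}(A)<1$ makes the deletion of the unit-cross-number atom $g^{\ord(g)}$ squander disproportionately many of the cheap atoms, which is precisely what forces the factorization through $g^{\ord(g)}$ to be genuinely shorter than the $m$-fold product of $A$.
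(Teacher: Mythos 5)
Your proposal is correct and follows essentially the same route as the paper in all three parts: the regrouping of $A^{\exp(G)}$ into blocks $g^{\ord(g)}$ for (1), the inclusion $\mathsf L(T)+\mathsf L(ST^{-1})\subset\mathsf L(S)$ for (2), and for (3) the decomposition $A^{\lceil \ord(g)/\mathsf v_g(A)\rceil}=g^{\ord(g)}\cdot W_1\cdot\ldots\cdot W_s$ combined with the minimality of $\mathsf k(A)$ and $\mathsf k(A)<1$ to force the second factorization length away from $\lceil \ord(g)/\mathsf v_g(A)\rceil$. Your quantitative phrasing of (3) (bounding the length of any factorization of the remainder by $\mathsf k(R)/\mathsf k(A)$) is just a rewording of the paper's cross-number comparison $\lceil \ord(g)/\mathsf v_g(A)\rceil\,\mathsf k(A)=1+\sum_{i=1}^s\mathsf k(W_i)>(1+s)\,\mathsf k(A)$, and your check that the remainder is nonempty is a point the paper leaves implicit.
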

\begin{proof}
	1. Suppose $\mathsf k(A)\neq 1$ and let $A=g_1\cdot\ldots\cdot g_{\ell}$, where $\ell\in \N$ and $g_1,\ldots, g_{\ell}\in G_0$. Then
	$$A^{\exp(G)}=(g_1^{\ord(g_1)})^{\frac{\exp(G)}{\ord(g_1)}}\cdot\ldots\cdot (g_\ell^{\ord(g_{\ell})})^{\frac{\exp(G)}{\ord(g_{\ell})}},$$
	which implies that 
	\[
	\left\{\exp(G), \sum_{i=1}^{\ell}\frac{\exp(G)}{\ord(g_i)}\right\}=\{\exp(G), \exp(G)\mathsf k(A)\}\subset \mathsf L(A^{\exp(G)}). 
	\]
	It follows by $\mathsf k(A)\neq 1$ that $|\mathsf L(A^{\exp(G)})|\ge 2$.
	
	\medskip
	2. Suppose $T$ is a zero-sum subsequence of $S$ with $|\mathsf L(T)|\ge 2$.
	It follows by  $\mathsf L(S)\supset\mathsf L(T)+\mathsf L(ST^{-1})$ that $|\mathsf L(S)|\ge |\mathsf L(T)|\ge 2$.
	
	\medskip
	3. Suppose $\mathsf k(A)<1$ and $\mathsf k(A)$ is minimal over all minimal zero-sum sequences over $G_0$. Let $g\in \supp(A)$. Then there exist $s\in \N$ and  minimal zero-sum sequences $W_1,\ldots, W_s$ such that $$A^{\lceil\frac{\ord(g)}{\mathsf v_g(A)}\rceil}=g^{\ord(g)}\cdot W_1\cdot\ldots\cdot W_s\,.$$
	Since $$\mathsf k\left(A^{\left\lceil\frac{\ord(g)}{\mathsf v_g(A)}\right\rceil}\right)=\left\lceil\frac{\ord(g)}{\mathsf v_g(A)}\right\rceil\mathsf k(A)=1+\sum_{i=1}^s\mathsf k(W_i)> (1+s)\mathsf k(A)\,,$$ we have $\lceil\frac{\ord(g)}{\mathsf v_g(A)}\rceil\neq s+1$ and hence $\left|\mathsf L\left(A^{\left\lceil\frac{\ord(g)}{\mathsf v_{g}(A)}\right\rceil}\right)\right|\ge2$.
\end{proof}

\medskip

 For commutative and finitely generated monoids, we have the following result.
\begin{proposition}
	Let $H$ be a commutative unit-cancellative monoid. 
	If $H_{\red}$ is finitely generated, then
	$\mathsf{hf}(H)$ is finite.		
\end{proposition}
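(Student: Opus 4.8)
The plan is to reduce to a reduced, finitely generated monoid, exploit that such a monoid has only finitely many divisor-closed submonoids, and then, on each non-half-factorial one, produce a single exponent (uniform in the element) that forces two distinct factorization lengths.

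First I would pass to $H_{\red}=H/H^{\times}$. The canonical map $H\to H_{\red}$ preserves atoms and factorization lengths, hence preserves sets of lengths, the property that $\LK a\RK$ is half-factorial, and the value of $\mathsf{hf}$; moreover $H_{\red}$ is again commutative and unit-cancellative and, by hypothesis, finitely generated. So I may assume $H$ is reduced and finitely generated. Then every atom belongs to any generating set, whence $\mathcal A(H)$ is finite and $H$ is atomic. For $a\in H$ the submonoid $\LK a\RK$ is atomic with atom set $\mathcal A(H)\cap\LK a\RK$ and is generated by it, so $\LK a\RK$ is determined by a subset of the finite set $\mathcal A(H)$; consequently there are only finitely many submonoids of the form $\LK a\RK$. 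Discarding the half-factorial ones, let $D_1,\dots,D_r$ be the non-half-factorial ones. It then suffices to find, for each $j$, an integer $N_j$ with $|\mathsf L(a^{N_j})|\ge 2$ for every $a$ satisfying $\LK a\RK=D_j$, since $N=\max_j N_j$ would witness $\mathsf{hf}(H)\le N<\infty$.

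Fix a non-half-factorial $D=D_j$ with atom set $\{u_1,\dots,u_k\}$, and choose once and for all an element $c\in D$ with $|\mathsf L(c)|\ge 2$ together with a factorization $c=u_1^{x_1}\cdots u_k^{x_k}$. The key reduction is the elementary remark that if $c\mid a^{M}$, say $a^{M}=c\,d$, then $\mathsf L(a^{M})\supseteq \mathsf L(c)+\mathsf L(d)$, so $|\mathsf L(a^{M})|\ge|\mathsf L(c)|\ge 2$; and the same then holds for every exponent $\ge M$. Thus it is enough to bound, uniformly in $a$ with $\LK a\RK=D$, some $M$ with $c\mid a^{M}$.

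The crux, which I expect to be the main obstacle, is exactly this uniform divisibility. Since $\LK a\RK=D$, each atom $u_i$ lies in $\LK a\RK$, i.e.\ $u_i\mid a^{m}$ for some $m$; equivalently $a$ lies in the radical $\sqrt{u_iD}=\{b\in D:b^{m}\in u_iD\text{ for some }m\}$, which is an ideal of $D$. Here I would use that $D$, being a finitely generated commutative monoid, is a homomorphic image $\pi\colon\N_0^{k}\twoheadrightarrow D$, so that $\pi^{-1}(\sqrt{u_iD})$ is an ideal of $\N_0^{k}$ and hence finitely generated by Dickson's lemma; pushing its generators forward shows $\sqrt{u_iD}$ is generated as an ideal by finitely many elements $b_{i,1},\dots,b_{i,p_i}$. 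Choosing $M_i$ with $b_{i,l}^{M_i}\in u_iD$ for all $l$, one obtains the implication $a\in\sqrt{u_iD}\Rightarrow a^{M_i}\in u_iD$, i.e.\ $u_i\mid a^{M_i}$, with $M_i$ independent of $a$. Finally, writing $a^{M_i}=u_i b_i$ and using commutativity,
\[
a^{\,\sum_i x_iM_i}\;=\;\prod_{i=1}^{k}\bigl(u_i b_i\bigr)^{x_i}\;=\;\Bigl(\prod_{i}u_i^{x_i}\Bigr)\Bigl(\prod_i b_i^{x_i}\Bigr)\;=\;c\cdot\prod_i b_i^{x_i},
\]
so $c\mid a^{N_j}$ with $N_j=\sum_i x_iM_i$ independent of $a$. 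By the previous paragraph $|\mathsf L(a^{N_j})|\ge 2$, which completes the construction of $N_j$ and hence the proof. The only nontrivial input is the finite generation of the radical ideals $\sqrt{u_iD}$ (equivalently, the Noetherianity of finitely generated commutative monoids via Dickson's lemma); everything else is routine bookkeeping.
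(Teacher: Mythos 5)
Your proposal is correct, but it takes a genuinely different (and heavier) route than the paper's. Both arguments first reduce to the case where $H$ is reduced and finitely generated, hence has finite atom set $\mathcal A(H)=\{u_1,\ldots,u_n\}$, and both extract uniformity from the finiteness of subsets of $\mathcal A(H)$; the difference is in how the uniform exponent is produced. The paper sidesteps your uniform-divisibility problem entirely: given $a=u_1^{k_1}\cdot\ldots\cdot u_n^{k_n}$, it passes to the squarefree product $a_0=\prod_{i:\,k_i\ge 1}u_i$, which satisfies \emph{both} $a_0\mid a$ and $\LK a_0\RK=\LK a\RK$; since $a_0$ ranges over the finite set $A_0$ of squarefree products of atoms, a single $M$ with $|\mathsf L(a_0^M)|\ge 2$ for the finitely many non-half-factorial $\LK a_0 \RK$ suffices, and $a_0^M\mid a^M$ transfers this to $a$ via $\mathsf L(a^M)\supseteq \mathsf L(a_0^M)+\mathsf L(b)$, where $a^M=a_0^M b$. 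You instead stratify by the finitely many divisor-closed submonoids $D=\LK a\RK$ (essentially the same finiteness, since every $\LK a\RK$ equals $\LK a_0\RK$ for some $a_0\in A_0$), fix a witness $c\in D$ with $|\mathsf L(c)|\ge 2$, and then do real work to force $c\mid a^{N}$ uniformly: Dickson's lemma applied to the pullback of each radical ideal $\sqrt{u_iD}$ under a presentation $\N_0^k\twoheadrightarrow D$, yielding $a\in\sqrt{u_iD}\Rightarrow a^{M_i}\in u_iD$ with $M_i$ independent of $a$. That lemma is correct and is a reusable Noetherianity statement worth knowing, but here it is machinery that the paper's choice of test element makes unnecessary: because $a_0$ divides $a$ itself, no power and no ideal theory are needed, only the observation that some fixed power of each bad $a_0$ already has two lengths. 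One shared gloss: both you (``whence $\mathcal A(H)$ is finite and $H$ is atomic'') and the paper take atomicity of a reduced finitely generated unit-cancellative commutative monoid for granted; the ``atomic'' part does not follow merely from atoms lying in every generating set, but needs unit-cancellativity, e.g.\ via the \textup{ACCP} (cf.\ \cite[Theorem 2.6]{Fa-Tr18a} combined with Dickson's lemma), so a one-line justification there would tighten your write-up.
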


\begin{proof} We may assume that $H$ is reduced and not half-factorial. Suppose $H$ is finitely generated and suppose $\mathcal A(H)=\{u_1,\ldots, u_n\}$, where $n\in \N$. Set $A_0=\{\prod_{i\in I}u_i\mid \emptyset\neq I\subset [1,n]\}$. Then $A_0$ is finite and hence there exists $M\in \N$ such that $|\mathsf L(a_0^M)|\ge 2$ for all $a_0\in A_0$ with $\LK a_0\RK$ not half-factorial. 
	 Let $a\in H\setminus H^{\times}$ such that $\LK a\RK$ is not half-factorial. It suffices to show that 
	 $|\mathsf L(a^M)|\ge 2$.
	  Suppose $a=u_1^{k_1}\cdot \ldots \cdot u_n^{k_n}$, where $k_1,\ldots, k_n\in \N_0$. Set $I_0=\{i\in [1,n]\mid k_i\ge 1\}$ and $a_0=\prod_{i\in I}u_i$. Then $a_0$ divides  $a$ and $\LK a_0\RK=\LK a\RK$ is not half-factorial, whence $|\mathsf L(a_0^M)|\ge 2$ and $|\mathsf L(a^M)|\ge 2$. 
\end{proof}

If $G_0$ is a finite subset of an abelian group, then $\mathcal B (G_0)$ is finitely generated (\cite[Theorem 3.4.2]{Ge-HK06a}) and thus $\mathsf{hf} (G_0) < \infty$. We refer to \cite[Sections 3.2 and 3.3]{F-G-K-T17} and \cite{Ge-Re18d} for semigroups of ideals and semigroups of modules that are finitely generated unit-cancellative but not necessarily cancellative.

\begin{examples}\label{2.4}
The following examples will help up to illustrate some important points.
\begin{enumerate}
	\item Let $(e_1,e_2)$ be a basis of  $\Z^2$ and let $G_0=\{e_1, -e_1, e_2,-e_2, e_1+e_2, -e_1-e_2\}$. Then 
	$\mathcal A(G_0)=\{e_1(-e_1), e_2(-e_2), (e_1+e_2)(-e_1-e_2), e_1e_2(-e_1-e_2), (-e_1)(-e_2)(e_1+e_2)\}$.
	Since $e_1(-e_1)\bdot e_2(-e_2)\bdot (e_1+e_2)(-e_1-e_2)= e_1e_2(-e_1-e_2)\bdot (-e_1)(-e_2)(e_1+e_2)$, we obtain $G_0$ is not half-factorial. Furthermore, we have $G_1$ is half-factorial for every nonempty proper subset $G_1\subsetneq G_0$. Let $A\in \mathcal B(G_0)$. If $\supp(A)=G_0$, then $|\mathsf L(A)|\ge 2$ and $\LK A\RK=\mathcal B(G_0)$ is not half-factorial. If $\supp(A)\subsetneq G_0$, then $\LK A\RK=\mathcal B(\supp(A))$ is half-factorial and
	$|\mathsf L(A)|=1$. Therefore $\mathsf{hf}(G_0)=1$.
	
\item Let $G$ be a cyclic group with order $n$ and let $g\in G$ with $\ord(g)=n$, where $n\in \N_{\ge 3}$. Set $G_0=\{g,-g\}$. Then $G_0$ is not half-factorial. Let $A_0=g(-g)$. Then $\LK A_0\RK$ is not half-factorial and $|\mathsf L(A_0^{n-1})|=1$, whence $\mathsf{hf}(G_0)\ge n$.
Let $A\in \mathcal B(G_0)$ with $\LK A\RK$ is not half-factorial. Then $\supp(A)=G_0$  and $A_0$ divides $A$, whence $|\mathsf L(A^n)|\ge 2$. Therefore $\mathsf{hf}(G_0)=n$. Let $G\cong C_2^2$ and let $(e_1,e_2)$ be a basis of $G$. Set $G_1=\{e_1,e_2,e_1+e_2\}$. Then $G_1$ is not half-factorial. Let $A_1=e_1e_2(e_1+e_2)$. Then $\LK A_1\RK$ is not half-factorial and $|\mathsf L(A_1)|=1$, whence $\mathsf{hf}(G_1)\ge 2$.
Let $A\in \mathcal B(G_1)$ with $\LK A\RK$ is not half-factorial. Then $\supp(A)=G_1$  and $A_1$ divides $A$, whence $|\mathsf L(A^2)|\ge 2$. Therefore $\mathsf{hf}(G_1)=2$.

	\item Let $H$ be a bifurcus moniod (i.e. $2\in \mathsf L(a)$ for all $a\in H\setminus (H^{\times}\cup \mathcal A(H))$). For examples, see \cite[Examples 2.1 and 2.2]{AA}. Since for every $a\in H\setminus H^{\times}$, we have $\{2,3\}\subset \mathsf L(a^3)$, it follows that $\mathsf{hf}(H)\le 3$ and $\mathsf{hf}(H)$ is the minimal integer $t\in \N$ such that $|\mathsf L(a^t)|\ge 2$ for all $a\in H\setminus H^{\times}$.
	 Therefore
 $\mathsf{hf}(H)=3$ if and only if there exists $a_0\in \mathcal A(H)$ such that $\mathsf L(a_0^2)=\{2\}$.

 	\item Let $H\subset F=F^{\times}\times [p_1,\ldots, p_s]$ be a non-half factorial finitely primary monoid of rank $s$ and exponent $\alpha$ (see \cite[Definition 2.9.1]{Ge-HK06a}). For every $a=\epsilon p_1^{t_1}\ldots p_s^{t_s}\in F$, we define $|\!| a|\!|=t_1+\ldots +t_s$, where $t_1,\ldots, t_s\in \N_0$ and $\epsilon\in F^{\times}$. Let $a\in H\setminus H^{\times}$. Since $H$ is primary, we have $H=\LK a\RK$ is not half-factorial. Thus $\mathsf{hf}(H)$ is the minimal integer $t\in \N$ such that $|\mathsf L(a^t)|\ge 2$ for all $a\in H\setminus H^{\times}$. Suppose $a_0\in H$ with $|\!|a_0|\!|=\min\{|\!|a|\!|\colon a\in H\setminus H^{\times}\}$. Then $a_0\in \mathcal A(H)$ and $\mathsf L(a_0^2)=\{2\}$, whence $\mathsf{hf}(H)\ge 3$.
 	
 	 If $H\setminus H^{\times}=(p_1\ldots p_s)^{\alpha}F$ and $s\ge 2$, then $H$ is bifurcus and hence $\mathsf{hf}(H)=3$.
 	Suppose $s=1$ and $H\setminus H^{\times}=(p_1)^{\alpha}F$. Let $b=\epsilon p^{\beta}\in H$. Then $p^{3\alpha}$ divides $b^4$. It follows by $p^{3\alpha}=(p^{\alpha})^3=p^{\alpha+1}p^{2\alpha-1}$ that $|\mathsf L(b^4)|\ge 2$, whence $\mathsf{hf}(H)\le 4$. If $3\beta\ge 4\alpha$, then $p^{3\alpha}$ divides $b^3$ and hence $|\mathsf L(b^3)|\ge 2$. If $3\beta\le 4\alpha-2$, then $b$ is an atom and $b^3=\epsilon^3p^{2\alpha-1}p^{3\beta-(2\alpha-1)}$, whence $|\mathsf L(b^3)|\ge 2$. If $3\beta=4\alpha-1$, then $\mathsf L(b^3)=\{3\}$. Put all together, if $\alpha\equiv 1\bmod  3$, then $\mathsf{hf}(H)=4$. Otherwise $\mathsf{hf}(H)=3$.
\end{enumerate}	
	
\end{examples}

\section{Proof of main theorem}\label{sec:3}

\begin{proposition}\label{p1}
Let $G_0\subset G$ be a non half-factorial subset and let $S$ be a zero-sum sequence over $G_0$ with $\supp(S)=G_0$.
\begin{enumerate}
\item If $G_0$ is an LCN-set, then $|\mathsf L(\prod_{g\in G_0}g^{\ord(g)})|\ge 2$.

\item If $|G_0|=2$, then $|\mathsf L(\prod_{g\in G_0}g^{\ord(g)})|\ge 2$.

\item If $G_0$ is a minimal non half-factorial subset, then $|\mathsf L(S^{\exp(G)})|\ge 2$.

\item If $|\{g\in G_0\mid \ord(g)/\mathsf v_g(S) =\exp(G)\}|\le 1$, then $|\mathsf L(S^{\exp(G)})|\ge 2$.
\end{enumerate}
\end{proposition}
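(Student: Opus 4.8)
The plan is to produce two factorizations of $S^{\exp(G)}$ of different lengths. Put $n=\exp(G)$ and, for $g\in G_0$, set $m_g=n\,\mathsf v_g(S)/\ord(g)$; since $\ord(g)\mid n$ each $m_g$ is a positive integer, and regrouping the terms of $S^n$ via $g^{n\mathsf v_g(S)}=(g^{\ord(g)})^{m_g}$ gives a factorization into cross-number atoms of length $\sum_{g\in G_0}m_g=n\,\mathsf k(S)$, so that $n\,\mathsf k(S)\in\mathsf L(S^n)$. The hypothesis $|\{g\in G_0\mid \ord(g)/\mathsf v_g(S)=n\}|\le 1$ says exactly that $m_g\ge 2$ for every $g\in G_0$ with at most one exception, which I denote $g^{*}$ (so $\ord(g^{*})=n$ and $\mathsf v_{g^{*}}(S)=1$).

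First I would dispose of the easy cases by inspecting $\mathsf L(S)$. If $|\mathsf L(S)|\ge 2$, then the $n$-fold sumset of $\mathsf L(S)$ is contained in $\mathsf L(S^n)$ and already has at least two elements. If $\mathsf L(S)=\{\ell\}$ with $\ell\neq\mathsf k(S)$, then concatenating $n$ copies of a length-$\ell$ factorization of $S$ shows $n\ell\in\mathsf L(S^n)$, and $n\ell\neq n\,\mathsf k(S)$ finishes this case. This leaves the critical case $\mathsf L(S)=\{\mathsf k(S)\}$, where $S$ is itself half-factorial.

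For the critical case I would exhibit a zero-sum subsequence of $S^n$ with two lengths and then invoke Lemma~\ref{l1}(2). As $G_0$ is not half-factorial, Lemma~\ref{2.3} yields an atom $W$ with $\mathsf k(W)\neq 1$; then $|\supp(W)|\ge 2$ and $1\le \mathsf v_g(W)\le \ord(g)-1$ for $g\in\supp(W)$. Let $a$ be the least positive integer with $\ord(g)\mid a\,\mathsf v_g(W)$ for all $g\in\supp(W)$, and set $D=\prod_{g\in\supp(W)}(g^{\ord(g)})^{c_g}$ with integers $c_g$ satisfying $a\,\mathsf v_g(W)/\ord(g)\le c_g\le m_g$. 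Then $D$ has the cross-number factorization of length $\sum_g c_g$, and also the factorization $D=W^{a}\cdot\prod_{g\in\supp(W)}(g^{\ord(g)})^{c_g-a\mathsf v_g(W)/\ord(g)}$ of length $\sum_g c_g+a(1-\mathsf k(W))$; since $a\ge 1$ and $\mathsf k(W)\neq 1$ these differ, so $|\mathsf L(D)|\ge 2$, while $c_g\le m_g$ gives $D\mid S^n$, and Lemma~\ref{l1}(2) yields $|\mathsf L(S^n)|\ge 2$. The admissible $c_g$ exist precisely when $a\le n\,\mathsf v_g(S)/\mathsf v_g(W)$ for every $g\in\supp(W)$, a bound exceeding $2$ as soon as $m_g\ge 2$ there.

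The main obstacle is the scarce element $g^{*}$. If $g^{*}\in\supp(W)$, then the available room is only $n/\mathsf v_{g^{*}}(W)$, which may be as small as $n/(n-1)<2$ and need not accommodate $a$, so the construction breaks down in this single coordinate; controlling it is the heart of the matter, and this is exactly where the hypothesis ``at most one such $g$'' is used. I expect to resolve it by choosing the witness $W$ with $g^{*}\notin\supp(W)$ whenever $G_0\setminus\{g^{*}\}$ is still non-half-factorial, so that the argument above applies verbatim; and, in the remaining situation where every atom of non-unit cross number must contain $g^{*}$ (equivalently $G_0\setminus\{g^{*}\}$ is half-factorial), by splitting off $(g^{*})^{n}$ from $S^{n}$ and reducing to a minimal non-half-factorial subset $G_1\ni g^{*}$ via part~(3) of Proposition~\ref{p1}. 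The delicate step is then to realise, inside $S^{n}$, a zero-sum sequence of full support $G_1$ that uses the lone available copy of $g^{*}$, and it is on this point that the proof ultimately turns.
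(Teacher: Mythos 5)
Your opening reductions are sound (the regrouping $S^n=\prod_{g\in G_0}(g^{\ord(g)})^{m_g}$ giving $n\,\mathsf k(S)\in\mathsf L(S^n)$ is exactly the mechanism behind Lemma~\ref{l1}.1, and the passage to the case $\mathsf L(S)=\{\mathsf k(S)\}$ is legitimate), but the core construction has a genuine gap. Admissible exponents $c_g$ exist only if $a\le n\,\mathsf v_g(S)/\mathsf v_g(W)$ for \emph{every} $g\in\supp(W)$, and you justify this merely by noting the right-hand side exceeds $2$ when $m_g\ge 2$. That is a non sequitur: $a$ is the least common multiple of the numbers $\ord(g)/\gcd(\ord(g),\mathsf v_g(W))$ and can be as large as $n$. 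Concretely, over $C_7$ with generator $g$, the atom $W=g^3(2g)^2$ has $\mathsf k(W)=5/7<1$ and forces $a=7$, so you need $c_g\ge 3$; if $\mathsf v_g(S)=2$ (so $m_g=2$, no scarcity at $g$) the constraint $c_g\le m_g$ is violated and your sequence $D$ does not divide $S^n$. Thus the claim that the argument ``applies verbatim'' once $g^{*}\notin\supp(W)$ is unjustified for an arbitrary witness $W$ with $\mathsf k(W)\neq 1$; one would have to show \emph{some} witness works, which you do not. The paper avoids this entirely: it treats $\mathsf k>1$ via part~1 (note $\prod_{g\in G_0}g^{\ord(g)}$ always divides $S^{\exp(G)}$), and for $\mathsf k<1$ it chooses $A$ with $\mathsf k(A)$ \emph{minimal} over all atoms and raises it only to the exponent $\bigl\lceil\ord(g_1)/\mathsf v_{g_1}(A)\bigr\rceil$ with $g_1$ minimizing $\ord(g)/\mathsf v_g(A)$; Lemma~\ref{l1}.3 (whose proof runs on that minimality, precisely the leverage your $\mathsf k(W)\neq1$ witness lacks) gives two lengths, and divisibility into $S^{\exp(G)}$ follows from $\mathsf v_g(A)\bigl\lceil\ord(g)/\mathsf v_g(A)\bigr\rceil\le\ord(g)+\mathsf v_g(A)-1<2\ord(g)\le\exp(G)\mathsf v_g(S)$ for each $g$ outside the scarce set $G_1$ --- a far smaller exponent than your $a$.

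The second gap is that you explicitly leave the decisive case unproved: when $G_0\setminus\{g^{*}\}$ is half-factorial you ``expect'' to reduce via part~(3) to a minimal non-half-factorial subset containing $g^{*}$ and realise a suitable full-support zero-sum sequence inside $S^n$ --- but that realisation re-encounters the same scarcity at $g^{*}$, and you concede this is ``where the proof ultimately turns.'' The paper closes this case by a different dichotomy, not by reduction to part~3: with $A$ of minimal cross number $\mathsf k(A)<1$ over $G_0$, either $\mathsf v_{g^{*}}(A)\bigl\lceil\ord(g_1)/\mathsf v_{g_1}(A)\bigr\rceil\le\exp(G)$, in which case $A^{\lceil\ord(g_1)/\mathsf v_{g_1}(A)\rceil}$ divides $S^{\exp(G)}$ and Lemma~\ref{l1}.3 finishes; or the reverse inequality forces $\mathsf v_g(A)<\mathsf v_{g^{*}}(A)\mathsf v_g(S)$ for all $g\neq g^{*}$, hence $S^{\mathsf v_{g^{*}}(A)}=A\cdot W$ with $W$ supported on the half-factorial set $G_0\setminus\{g^{*}\}$, so $\mathsf k(A)=\mathsf v_{g^{*}}(A)\mathsf k(S)-\mathsf k(W)$ is an integer (after the standard reduction that every atom dividing $S$ has cross number $1$, so $\mathsf k(S)\in\N$), contradicting $0<\mathsf k(A)<1$. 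Finally, note that the statement comprises all four parts of Proposition~\ref{p1}; your proposal addresses only part~4 and even invokes part~3 as an ingredient, so parts~1--3 would still need proofs.
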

\begin{proof}
	1. Suppose $G_0$ is an LCN-set. Since $G_0$ is not half-factorial, there exists a minimal zero-sum sequence $T$ over $G_0$ such that $\mathsf k(T)>1$. Note that $T$ is a subsequence of $\prod_{g\in G_0}g^{\ord(g)}$. Then there exits $W_1,\ldots, W_l\in \mathcal A(G_0)$ such that
	$$\prod_{g\in G_0}g^{\ord(g)}=T\cdot W_1\cdot \ldots\cdot W_l\,.$$
	Thus $\mathsf k(\prod_{g\in G_0}g^{\ord(g)})=|G_0|=\mathsf k(T)+\sum_{i=1}^l\mathsf k(W_i)>1+l$.  The assertion follows by $\{|G_0|, 1+l\}\subset \mathsf L(\prod_{g\in G_0}g^{\ord(g)})$.
	
	\medskip
	2. Suppose $|G_0|=2$ and let $G_0=\{g_1,g_2\}$. If $G_0$ is an
	LCN-set, the assertion follows by 1.. Suppose there exists a minimal
	zero-sum sequence $T$ over $G_0$ with $\mathsf k(T)<1$. Let
	$T_0=g_1^{l_1}\cdot g_2^{l_2}$ be the minimal zero-sum sequence over
	$G_0$ such that $\mathsf k(T_0)$ is minimal. If
	$\min\{\frac{\ord(g_1)}{l_1}, \frac{\ord(g_2)}{l_2}\}\le 2$, say
	$\frac{\ord(g_1)}{l_1}\le 2$ then
	$$T_0^2=g_1^{\ord(g_1)}\cdot W, \text{ where $W$ is non-empty zero-sum sequence}\,. $$
	Thus $\mathsf k(W)=2\mathsf k(T_0)-1<\mathsf k(T_0)$, a
	contradiction to the minimality of $\mathsf k(T_0)$. Therefore
	$\min\{\frac{\ord(g_1)}{l_1}, \frac{\ord(g_2)}{l_2}\}>2$ and hence
	$$g_1^{\ord(g_1)}\cdot g_2^{\ord(g_2)}=T_0^2\cdot V \text{ where $V$ is non-empty zero-sum sequence}\,.$$
	It follows that $|\mathsf L(g_1^{\ord(g_1)}\cdot g_2^{\ord(g_2)})|\ge 2$.
	
	\medskip
	3. Suppose that $G_0$ is a minimal non-half-factorial set. If $S$ has a minimal zero-sum subsequence $A$ with $\mathsf k(A)\neq 1$, then the assertion follows by Lemma \ref{l1}. If $G_0$ is an LCN-set, then the assertion follows from 1. and Lemma \ref{l1}.2. Therefore we can suppose $\mathsf L(S)=\{\mathsf k(S)\}$ and suppose there exists a minimal zero-sum sequence $T$ over $G_0$ with $\mathsf k(T)<1$.

	Let $T_0=\prod_{i=1}^{|G_0|}g_i^{l_i}$ be the minimal zero-sum sequence over $G_0$ such that $\mathsf k(T_0)$ is minimal. The minimality of $G_0$ implies that $l_i\ge 1$ for all $i\in [1,|G_0|]$. After renumbering if necessary, we let $$\frac{\ord(g_1)}{l_1}=\min\{\frac{\ord(g_i)}{l_i}\mid i\in [1,|G_0|]\}\,.$$
	By Lemma \ref{l1}.3, $\left|\mathsf L\left(T_0^{\left\lceil\frac{\ord(g_1)}{l_1}\right\rceil}\right)\right|\ge2$. 
	If $T_0^{\left\lceil\frac{\ord(g_1)}{l_1}\right\rceil}$ divides $ S^{\exp(G)}$, 
	the assertion follows by Lemma \ref{l1}.2.
	Suppose $T_0^{\left\lceil\frac{\ord(g_1)}{l_1}\right\rceil}\nmid S^{\exp(G)}$.
	Let $$I=\{i\in [1,|G_0|]\mid \left\lceil\frac{\ord(g_1)}{l_1}\right\rceil l_i>\exp(G)\mathsf v_{g_i}(S)\}\,.$$
	Thus for each  $i\in I$, we have $$2\ord(g_i)>l_i\left\lceil\frac{\ord(g_i)}{l_i}\right\rceil\ge l_i\left\lceil\frac{\ord(g_1)}{l_1}\right\rceil>\exp(G)\mathsf v_{g_i}(S)\ge \exp(G)\,,$$
	which implies that $\ord(g_i)=\exp(G)$, $\mathsf v_{g_i}(S)=1$, and $\left\lceil\frac{\ord(g_1)}{l_1}\right\rceil>\frac{\ord(g_i)}{l_i}=\frac{\exp(G)}{l_i}$.  
	
	Let $i_0\in I$ such that $l_{i_0}=\max\{l_i\mid i\in I\}$. Therefore  for every $j\in [1,|G_0|]\setminus I$, we have 
	$$l_j\le \frac{\exp(G)\mathsf v_{g_j}(S)}{\left\lceil\frac{\ord(g_1)}{l_1}\right\rceil}\le \frac{\exp(G)\mathsf v_{g_j}(S)}{\frac{\exp(G)}{l_{i_0}}}=l_{i_0}\mathsf v_{g_j}(S)\,.$$
	Note that for every $i\in I$, we have $l_i\le l_{i_0}=l_{i_0}\mathsf v_{g_i}(S)$. It follows by $\mathsf v_{g_{i_0}}(T_0)=l_{i_0}=l_{i_0}\mathsf v_{g_{i_0}}(S)=\mathsf v_{g_{i_0}}(S^{l_{i_0}})$ that 
	$$S^{l_{i_0}}=T_0\cdot W, \text{ where $W$ is a zero-sum sequence over $G_0\setminus\{g_{i_0}\}$} \,.$$
	By the minimality of $G_0$, we have $G_0\setminus\{g_{i_0}\}$ is half-factorial which implies that $\mathsf k(W)\in \N$. Therefore  $\mathsf k(T_0)=l_{i_0}\mathsf k(S)-\mathsf k(W)$ is an integer, a contradiction to $\mathsf k(T_0)<1$.
	
	\medskip
	4. Let $G_1=\{g\in G_0\mid \ord(g)=\exp(G)\mathsf v_g(S)\}$. Suppose $G_0\setminus G_1$ is not half-factorial. If $G_0\setminus G_1$ is an LCN-set, then the assertion follows by Proposition \ref{p1}.1 and Lemma \ref{l1}.2. Otherwise there exits a minimal zero-sum sequence $A$ over $G_0\setminus G_1$ such that $\mathsf k(A)<1$.  We may assume that $\mathsf k(A)$ is minimal over all minimal zero-sum sequences over $G_0\setminus G_1$ and  that $\min\{\frac{\ord(g)}{\mathsf v_g(A)}\mid g\in \supp(A)\}=\frac{\ord(g_0)}{\mathsf v_{g_0}(A)}$ for some $g_0\in \supp(A)\subset G_0\setminus G_1$.
	Thus by Lemma \ref{l1}.3, we have $|\mathsf L\left(A^{\left\lceil\frac{\ord(g_0)}{\mathsf v_{g_0}(A)}\right\rceil}\right)|\ge 2$. The definition of $G_1$ implies that $$A^{\left\lceil\frac{\ord(g_0)}{\mathsf v_{g_0}(A)}\right\rceil}\quad \text{ divides }\quad S^{\exp(G)}$$ and hence the assertion follows.
	
	Suppose $G_0\setminus G_1$ is  half-factorial. Then $G_1$ is non-empty and hence $G_1=\{g_0\}$ for some $g_0\in G_0$. If $G_0$ is an LCN-set, then the assertion follows by Proposition \ref{p1}.1 and Lemma \ref{l1}.2. Otherwise there exits a minimal zero-sum sequence $A$ over $G_0$ such that $\mathsf k(A)<1$.  We may assume that $\mathsf k(A)$ is minimal over all minimal zero-sum sequences over $G_0$  and  that $\min\{\frac{\ord(g)}{\mathsf v_g(A)}\mid g\in \supp(A)\}=\frac{\ord(g_1)}{\mathsf v_{g_1}(A)}$ for some $g_1\in \supp(A)\subset G_0$.
	Thus by Lemma \ref{l1}.3, we have $|\mathsf L\left(A^{\left\lceil\frac{\ord(g_1)}{\mathsf v_{g_1}(A)}\right\rceil}\right)|\ge 2$.
	For every $g\in G_0\setminus G_1$, we obtain
	$$\mathsf v_g(A)\left\lceil\frac{\ord(g_1)}{\mathsf v_{g_1}(A)}\right\rceil\le \mathsf v_g(A)\left\lceil\frac{\ord(g)}{\mathsf v_g(A)}\right\rceil<2\ord(g)\le \exp(G)\mathsf v_g(S)\,.$$
	If $\mathsf v_{g_0}(A)\left\lceil\frac{\ord(g_1)}{\mathsf v_{g_1}(A)}\right\rceil\le \ord(g_0)=\exp(G)$, then  $$A^{\left\lceil\frac{\ord(g_1)}{\mathsf v_{g_1}(A)}\right\rceil}\quad \text{ divides }\quad S^{\exp(G)}$$ and hence $|\mathsf L(S^{\exp(G)})|\ge 2$.
	
	Otherwise for every $g\in G\setminus G_1$, we have  $$\frac{\exp(G)}{\mathsf v_{g_0}(A)}<\left\lceil\frac{\ord(g_1)}{\mathsf v_{g_1}(A)}\right\rceil\le \left\lceil\frac{\ord(g)}{\mathsf v_{g}(A)}\right\rceil\le \left\lceil\frac{\exp(G)\mathsf v_{g}(S)}{2\mathsf v_{g}(A)}\right\rceil\le \frac{\exp(G)\mathsf v_{g}(S)}{\mathsf v_{g}(A)}\,.$$
	Therefore $\mathsf v_g(A) <\mathsf v_{g_0}(A)\mathsf v_g(S)$ for all $g\in G_0\setminus G_1$ which implies that $A$ divides $S^{\mathsf v_{g_0}(A)}$.
	Thus there exits a zero-sum sequence $W$ over $G_0\setminus G_1$ such that
	$S^{\mathsf v_{g_0}(A)}=A\cdot W$. Since $G_0\setminus G_1$ is half-factorial, we obtain $\mathsf k(A)=\mathsf v_{g_0}(A)\mathsf k(S)-\mathsf k(W)$ is an integer, a contradiction to $\mathsf k(A)<1$.
\end{proof}

\bigskip
\begin{proof}[Proof of Theorem \ref{t1}] By the definition of transfer Krull monoid,  it suffices to prove the assertions for $H=\mathcal B(G_0)$ and hence $H$ is half-factorial if and only if $G_0$ is half-factorial.
	If $G_0$ is half-factorial, it is easy to see that $\mathsf{hf}(G_0)=1$ and $\bigl|\mathsf L\bigl(\prod_{g\in G_0}g^{2\ord(g)}\bigr)\bigr|=1$. Therefore we only need to show that (b) implies (c) and that (d) implies (c).
	
	(b) $\Rightarrow$ (c)	Suppose $\mathsf{hf}(G_0)=1$ and assume to the contrary that $G_0$ is not half-factorial. Then there exists $A\in \mathcal A(G_0)$ such that $\mathsf k(A)\neq 1$, whence $\supp(A)$ is not half-factorial. Therefore $\mathsf{hf}(\supp(A))\ge 2$, a contradiction.

	(d) $\Rightarrow$ (c) Suppose $|\mathsf L(\prod_{g\in G_0}g^{2\ord(g)})|=1$ and
	assume to the contrary that $G_0$ is not half-factorial. If $G_0$ is an LCN set, then Proposition \ref{p1}.1 implies that  $|\mathsf L(\prod_{g\in G_0}g^{\ord(g)})|\ge 2$, a contradiction. Thus there exists an atom $A\in \mathcal A(G_0)$ with $\mathsf k(A)<1$ and we may assume that $\mathsf k(A)$ is minimal over all atoms of $\mathcal B(G_0)$. Let $g_0\in \supp(A)$. Then by Lemma \ref{l1}.3, we have  $\left|\mathsf L\left(A^{\left\lceil\frac{\ord(g_0)}{\mathsf v_{g_0}(A)}\right\rceil}\right)\right|\ge2$, a contradiction to $A^{\left\lceil\frac{\ord(g_0)}{\mathsf v_{g_0}(A)}\right\rceil}\t \prod_{g\in G_0}g^{2\ord(g)}$.
\end{proof}

\bigskip
 \begin{proof}[Proof of Theorem \ref{main}]  By the definition of transfer Krull monoid, it sufficient to prove all assertions for $H=\mathcal B(G)$.
 	
 1.  Suppose $\exp(G)<\infty$. If $|G|\ge 3$, then 2. implies that $\mathsf{hf}(G)<\infty$. If $|G|\le 2$, then $\mathcal B(G)$ is half-factorial and hence $\mathsf{hf}(G)=1$.
 
 Suppose $\exp(G)=\infty$. If there exists an element $g\in G$ with $\ord(g)=\infty$, then $A_n=((n+1)g)(-ng)(-g)$ is an atom for every $n\in \N$. Since $\{(n+1)g, -ng, -g\}$ is not half-factorial  and $|\mathsf L(A_n^n)|=1$ for every $n\ge 2$, we obtain that
 $\mathsf{hf}(G)\ge n$ for every $n\ge 2$, that is, $\mathsf{hf}(G)=\infty$.
 Otherwise $G$ is torsion. Then there exists a sequence $(g_i)_{i=1}^{\infty}$ with $g_i\in G$ and $\lim_{i\rightarrow \infty}\ord(g_i)=\infty$. It follows by 1. that
 $\mathsf{hf}(G)\ge \mathsf{hf}(\langle g_i\rangle)\ge \ord(g_i)$ for all $i\in \N$, that is,  $\mathsf{hf}(G)=\infty$.

 2. %
 If $G$ is an elementary $2$-group and $e_1, e_2$ are two independent elements, then $\{e_1,e_2,e_1+e_2\}$ is not a half-factorial set and $|\mathsf L(e_1e_2(e_1+e_2))|=1$ which implies that $\mathsf{hf}(G)\ge 2=\exp(G)$. Otherwise there exists an element $g\in G$ with $\ord(g)=\exp(G)\ge 3$. Since $\{g,-g\}$ is not half-factorial and $|\mathsf L(g^{\ord(g)-1}(-g)^{\ord(g)-1})|=1$, we obtain $\mathsf{hf}(G)\ge \ord(g)=\exp(G)$.

  Let $S$ be a zero-sum sequence over $G$ such that $\supp(S)$ is not half-factorial. In order to prove $\mathsf{hf}(G)\le \left\lfloor\frac{3\exp(G)-3}{2}\right\rfloor$,   we show that $$|\mathsf L(S^{\left\lfloor\frac{3\exp(G)-3}{2}\right\rfloor})|\ge 2\,.$$
Set $G_0=\supp(S)$. If $G_0$ is an LCN-set, the assertion follows by Proposition \ref{p1}.1.
  Suppose there exists an atom $A\in \mathcal A(G_0)$ with $\mathsf k(A)<1$.
  Let $A_0\in \mathcal A(\supp(S))$ be such that $\mathsf k(A_0)$ is minimal over all minimal zero-sum sequences over $G_0$ and set $A_0=g_1^{l_1}\cdot\ldots \cdot g_y^{l_y}$, where $y, l_1,\ldots l_y\in \N$ and $g_1,\ldots, g_y\in \supp(S)$ are pairwise distinct elements.
  If there exists $j\in [1,y]$ such that $2l_j\ge \ord(g_j)$, then $g_j^{\ord(g_i)}$ divides $A_0^2$ and hence $A_0^2=g_j^{\ord(g_j)}\cdot W$ for some non-empty  sequence $W\in \mathcal B(\supp(S))$. Thus $\mathsf k(W)=2\mathsf k(A_0)-1<\mathsf k(A_0)$, a contradiction to the minimality of $\mathsf k(A_0)$. Therefore
  $$2l_i\le  \ord(g_i)-1 \text{ for all $i\in [1,y]$}\,.$$
 After renumbering if necessary,  we assume $\frac{\ord(g_1)}{l_1}=\min\{\frac{\ord(g_i)}{l_i}\mid i\in [1,y]\}$.
  Then $$l_i\left\lceil\frac{\ord(g_1)}{l_1}\right\rceil\le l_i\left\lceil\frac{\ord(g_i)}{l_i}\right\rceil\le l_i\frac{\ord(g_i)+l_i-1}{l_i}\le \ord(g_i)+\frac{\ord(g_i)-1}{2}-1\le \frac{3\exp(G)-3}{2}\,,$$
 which implies  $A_0^{\left\lceil\frac{\ord(g_1)}{l_1}\right\rceil}$ divides $S^{\left\lfloor\frac{3\exp(G)-3}{2}\right\rfloor}$. The assertion follows by Lemma \ref{l1}.3.

   \medskip
  3(a). Suppose that  $G$ is cyclic and that $g\in G$ with $\ord(g)=|G|\ge 3$. We will show that $\mathsf{hf}(G)=\exp(G)$.
  
  Let $S$ be a zero-sum sequence over $G$ such that $\supp(G)$ is not half-factorial. It suffices to show $|\mathsf L(S^{\exp(G)})|\ge 2$.
  If $|\{g\in \supp(S)\mid \ord(g)=|G|\mathsf v_g(S)\}|\le 1$, then the assertion follows from Proposition \ref{p1}.4. Suppose $|\{g\in \supp(S)\mid \ord(g)=|G|\mathsf v_g(S)\}|\ge 2$. Then there exist
  distinct $g_1, g_2\in \supp(S)$ such that $\ord(g_1)=\ord(g_2)=|G|$. We may assume that $g_1=kg_2$ for some $k\in \N_{\ge 2}$ with $\gcd(k,|G|)=1$. It follows by $\mathsf k(g_1^{|G|-k}\cdot g_2)<1$ that
  $G_0=\{g_1,g_2\}=\{g_1,kg_1\}$ is not half-factorial. By Proposition \ref{p1}.2, we obtain that $|\mathsf L(S^{\exp(G)})|\ge 2$.

  \medskip
  3(b). Suppose $G$ is a finite abelian group with $\exp(G)\le 6$. We need to prove that  $\mathsf{hf}(G)=\exp(G)$.
   Let $S$ be a zero-sum sequence over $G$ such that $\supp(G)$ is not half-factorial. It suffices to show $|\mathsf L(S^{\exp(G)})|\ge 2$.

  If $\supp(S)$ is an LCN-set, the assertion follows by Proposition \ref{p1}.1.
 Thus there is a minimal zero-sum sequence $W$ over $\mathsf {supp}(S)$ such that \[\mathsf {k}(W)<1\,.\] By  Proposition \ref{p1}.2 and Lemma \ref{l1}.2, we have
  $$|\mathsf {supp} (W)|\geq 3.$$
Suppose $W\t S$. Since $\mathsf {k} (W)<1$, it follows by Lemma \ref{l1} that $|\mathsf L(W^{\exp(G)})|\ge 2$ and hence $|\mathsf L(S^{\exp(G)})|\ge 2$.
Therefore we may assume that $W\nmid S$, whence  $|W|\ge |\supp(W)|+1\ge 4$. It follows that  $6\ge \exp(G)\ge \frac{|W|}{\mathsf k(W)}>|W|\ge4$.

  We distinguish two cases according to $\exp(G)\in\{5,6\}.$

{\bf Case 1.} $\exp (G)=5$.

Then, $G\cong C_5^r$ and for all $W\in \mathcal A(\supp(S))$ with $\mathsf k(W)<1$, we have that 
 \[W\nmid S, \ |\supp(W)|=3,\  \text{ and } \ |W|=4\,.\]

Let $W_0$ be an atom over $\supp(S)$ with $\mathsf k(W_0)<1$. Then 
 $W_0$ and  $S$ must be of the forms
 $$W_0=g_1^2g_2g_3,\quad S=Tg_1g_2g_3$$ where $g_1, g_2, g_3\in \supp(S)$ are pairwise distinct and $T\in \mathcal F(\supp(S)\setminus \{g_1\})$ with $\sigma(T)=g_1$.

We may assume $T$ is zero-sum free. Otherwise $T=T_0T'$ with $T_0$ is zero-sum and $T'$ is zero-sum free and we can replace $S$ by $T'g_1g_2g_3$, since $|\mathsf L((T'g_1g_2g_3)^5)|\ge 2$ implies that $|\mathsf L(S^5)|\ge 2$. Therefore $S$ is a product of at most three atoms and every term of $S$ has order $5$.

Assume to the contrary that $|\mathsf L(S^5)|=1$, i.e., $\mathsf L(S^5)=\{|T|+3\}$. 
Since $g_1^5g_2^5g_3^5=W_0^2(g_1g_2^3g_3^3)$ is a zero-sum subsequence of $S^5$, we obtain that $g_1g_2^3g_3^3$ is an atom.
Note that 
\[(g_1^2g_2g_3)^2S=(g_1^4T)(g_1g_2^3g_3^3)\text{ is a zero-sum subsequence of }S^5\,.\]

Suppose $S$ is an atom. Then $\mathsf L(g_1^4T)=\{2\}$ and hence $|g_1^4T|\ge 2\times 4=8$, i.e., $|T|\ge 4$.
It follows by 
 $\{5\}=\mathsf L(S^5)=\{|T|+3\}$ that $|T|=2$, a contradiction. 

Suppose $S$ is a product of two atoms. Then $\mathsf L(g_1^4T)=\{3\}$ and hence $|g_1^4T|\ge 3\times 4=12$, i.e., $|T|\ge 8$.
It follows by 
$\{10\}=\mathsf L(S^5)=\{|T|+3\}$ that $|T|=7$, a contradiction.

Suppose $S$ is a product of three atoms. Then $\mathsf L(g_1^4T)=\{4\}$ which implies that $T=T_1T_2T_3T_4$ such that $g_1T_i$ is zero-sum for all $i\in [1,4]$. Since $g_1T_i\t S$, we obtain $\mathsf k(g_1T_i)\ge 1$ and hence $|g_1T_i|\ge 5$. Therefore $|g_1^4T|\ge 4\times 5=20$, i.e., $|T|\ge 16$.
It follows by 
$\{15\}=\mathsf L(S^5)=\{|T|+3\}$ that $|T|=12$, a contradiction. 

{\bf Case 2.} $\exp(G)=6$.

Let $W$ be an atom over $\supp(S)$ with $\mathsf k(W)<1$.
If $|W|=4$,  then  $|\mathsf {supp}(W)|= 3$ and hence 
$W$ must be of the form
$$W=g_1^2g_2g_3$$ where $g_1,g_2,g_3\in \supp(S)$ are pairwise distinct. Since $(g_1^6)(g_2^6) (g_3^6)=W^3 (g_2^3g_3^3)$, we obtain that 
$|\mathsf L(g_1^6g_2^6g_3^6)|\ge 2$. It follows by $g_1^6g_2^6g_3^6$ divides $S^{\exp(G)}$ that $|\mathsf L(S^{\exp(G)})|\ge 2$.

\smallskip
If $|W|=5$, then  $|\mathsf {supp}(W)|= 3$ or $4$ and hence 
$W$ must be one of the following
forms.

\begin{enumerate}
\item[i.] $W=g_1^3g_2g_3$ with $g_1,g_2,g_3\in \supp(S)$ are pairwise distinct.
\item[ii.] $W=g_1^2g_2^2g_3$ with $g_1,g_2,g_3\in \supp(S)$ are pairwise distinct.
\item[iii.] $W=g_1^2g_2g_3g_4$ with $g_1,g_2,g_3,g_4\in \supp(S)$ are pairwise distinct.
\end{enumerate}

Suppose (i) holds. Then $0=2\sigma(W)=6g_1+2g_2+2g_3=2g_2+2g_3$. Since $(g_1^6)(g_2^6)(g_3^6)=W^2 (g_2^2g_3^2)^2$, we obtain that 
$|\mathsf L(g_1^6g_2^6g_3^6)|\ge 2$. It follows by $g_1^6g_2^6g_3^6$ divides $S^{\exp(G)}$ that $|\mathsf L(S^{\exp(G)})|\ge 2$.

Suppose (ii) holds.  Then $0=3\sigma(W)=6g_1+6g_2+3g_3=3g_3$. Thus $\ord(g_3)=2$ and hence $\mathsf k(W)\ge 1/2+4/6>1$, a contradiction.

Suppose
(iii) holds.  Then $0=3\sigma(W)=6g_1+3g_3+3g_3+3g_4=3g_2+3g_3+3g_4$. Therefore
$W_0=g_2^3g_3^3g_4^3$ is zero-sum. 
 If $W_0=g_1^6g_2^6g_3^6g_4^6(W)^{-3}$
is not a minimal zero-sum sequence,  then $|\mathsf L(g_1^6g_2^6g_3^6g_4^6)|\ge 2$ and hence  $|\mathsf L(S^{\exp(G)})|\ge 2$.
If $W_0$ is minimal
zero-sum, then $g_1^6g_2^6g_3^6g_4^6=(g_1^6)W_0^2$ implies that $|\mathsf L(g_1^6g_2^6g_3^6g_4^6)|\ge 2$ and hence  $|\mathsf L(S^{\exp(G)})|\ge 2$.
\end{proof}

\providecommand{\bysame}{\leavevmode\hbox to3em{\hrulefill}\thinspace}
\providecommand{\MR}{\relax\ifhmode\unskip\space\fi MR }
\providecommand{\MRhref}[2]{%
  \href{http://www.ams.org/mathscinet-getitem?mr=#1}{#2}
}
\providecommand{\href}[2]{#2}

\end{document}